\theoremstyle{plain}
\newtheorem{lem}{Lemma}
\newtheorem{prop}{Proposition}
\newtheorem{thm}{Theorem}
\theoremstyle{definition}
\newtheorem{rem}{Remark}
\renewcommand{\P}{\mathbb P}
\newcommand{\Z}{\mathbb Z}
\newcommand{\D}{\mathbb D}
\newcommand{\R}{\mathbb R}
\renewcommand{\S}{\mathbb S}
\renewcommand{\H}{\mathbb H}
\newcommand{\eps}{\epsilon}
\newcommand{\fig}[3]{{\includegraphics[height=#1cm,width=#2cm]{#3}}}
\newcommand{\U}{\mathcal U}
\DeclareMathOperator{\orb}{orb}
\DeclareMathOperator{\norm}{N}
\newcommand{\qi}{\mathbf{i}}
\newcommand{\qj}{\mathbf{j}}
\newcommand{\qk}{\mathbf{k}}
\newcommand{\e}{\mathfrak{e}}
\renewcommand{\o}{\mathfrak{o}}
\newcommand{\Mend}{\hfill \ensuremath{\vartriangleleft}}
\title{\vspace{-1.1cm}
Kinematic generation of Darboux cyclides}
\author{Niels Lubbes, Josef Schicho}
\begin{document}

\maketitle

\begin{abstract}
We state a relation between two families of lines that cover a quadric surface in 
the Study quadric and two families of circles that cover a Darboux cyclide.

{\bf Keywords:} Study quadric, Darboux cyclides
\end{abstract}

\section{Introduction}

The Study quadric is a projective compactification of the group of Euclidean displacements.
If we fix a point in 3-space, then projective varieties inside the Study quadric --- considered
as sets of displacements --- give rise to orbit varieties in 3-space. 

Let us consider the relation between a class
of varieties in the Study quadric and their orbits in 3-space.
A classical example is presented by the class of lines in the Study quadric;
the orbit of a line is either a circle, line or point in 3-space (see Lemma~\ref{l}).
The orbit of a conic in the Study quadric is a rational quartic curve with full cyclicity \cite{or}.
The case of rational curves of arbitrary degree in the Study quadric is studied in \cite{bj}:
a general rational curve of degree $d$ in the Study quadric has an orbit of degree $2d$.
We also have a uniqueness result: 
for any rational curve of degree $d$ and cyclicity $2c$,
there is a unique rational curve in the Study quadric of degree $d-c$
in the Study quadric defining that orbit \citep[Theorem~2]{zjs}.

In this paper we show that the orbit of a doubly ruled quadric surface in the Study quadric is a Darboux cyclide.
Darboux cyclides are surfaces that contain at least two and at most six circles through each point. 
These surfaces have been recently studied in \cite{s1,s3,kz,p}.
Theorem~\ref{main} is again a uniqueness result:
for two families of circles that cover a Darboux cyclide there exists
a unique doubly ruled quadric surface in the Study quadric.

%

\section{The orbit map}

The \emph{dual quaternions} are defined as the noncommutative associative algebra
\[
\D\H:=
\R[\qi,\qj,\qk,\eps]
/
\langle \qi^2+1,\qj^2+1, \qk^2+1, \qi\qj\qk+1,\eps^2,\eps\qi-\qi\eps,\eps\qj-\qj\eps,\eps\qk-\qk\eps \rangle
.
\]
We consider the following coordinates for $h\in\D\H$ and $\overline{h}\in\D\H$:
\begin{gather*}
\label{eqn:pq}
h
=
p+q\eps
= 
(p_0+p_1\qi+p_2\qj+p_3\qk) + (q_4 + q_5\qi + q_6\qj + q_7\qk)\eps,
\\
\overline{h}=\overline{p}+\overline{q}\eps
= 
(p_0-p_1\qi-p_2\qj-p_3\qk) + (q_4 - q_5\qi - q_6\qj - q_7\qk)\eps.
\end{gather*}
We denote by $N:\D\H\to\D,~h\mapsto h\bar{h}$, the dual quaternion norm.
By projectivizing $\D\H$ as a real 8-dimensional vector space, we obtain $\P^7$. 
The \emph{Study quadric} is defined as 
\[
S:=\{~ h\in\P^7 ~|~ h\overline{h} \in \R ~\}
=
\{~ p+q\eps\in\P^7 ~|~ p_0q_0+p_1q_1+p_2q_2+p_3q_3=0 ~\}.
\]
The \emph{Study boundary} $B\subset S$ is defined as
$B:=\{~ h\in S ~|~ h\overline{h}=0 ~\}$.
If we identify $\R^3$ with $\{~ v\in \D\H ~|~ v=v_1\qi+v_2\qj+v_3\qk ~\}$, 
then the \emph{Study kinematic mapping} is a group action
\[
\varphi\colon (S\setminus B)\times \R^3 \to \R^3,\quad  (p+q\eps, v) \mapsto \frac{pv\bar{p}+p\bar{q}-q\bar{p}}{p\bar{p}},
\]
and $S\setminus B\cong SE(3)$ via this action \citep[Section 2.1]{hs}. 
We choose the following coordinates for the 3-dimensional \emph{M\"obius quadric}: 
\[
\S^3:=\{~ x\in\P^4 ~|~ x_0x_4-x_1^2-x_2^2-x_3^2=0 ~\}. 
\]
With this somewhat unusual choice of coordinates the \emph{stereographic projection} with center  
$(0:0:0:0:1)\in\S^3$ is defined as 
\[
\tau\colon \S^3\to \P^3,\quad (x_0:\ldots:x_4)\mapsto (x_0:x_1:x_2:x_3).  
\]
For any point $u=(u_0:\ldots:u_4)\in\S^3$ such that $u_0\neq0$, the \emph{orbit map} is defined as 
\begin{equation*}
\begin{array}{rrcl}
\orb_u\colon & S\setminus F_u       & \to & \S^3 \\
             & p+q\eps              & \mapsto & \left(p\bar{p}:w_1:w_2:w_3:4q\bar{q}-p\bar{p}v^2+2(qv\bar{p}-pv\bar{q})\right),
\end{array}
\end{equation*}
where 
$pv\bar{p}+p\bar{q}-q\bar{p}=w_1\qi+w_2\qj+w_3\qk$ with $v=\frac{u_1}{u_0}\qi+\frac{u_2}{u_0}\qj+\frac{u_3}{u_0}\qk$ the dehomogenization of $\tau(u)$ and 
\[
F_u:=\{~ p+q\eps \in S ~|~ p\bar{p}=pv\bar{p}+p\bar{q}-q\bar{p}=4q\bar{q}+2(qv\bar{p}-pv\bar{q})=0  ~\}\subset B.
\]
Notice that the orbit map is the composition of the projective closure of 
$\varphi(\cdot,v):S\to\R^3$ with the inverse stereographic projection.
We fix notation for the identity $\e:=(1:0:\ldots:0)\in S$ and the origin $\o:=(1:0:\ldots:0)\in \S^3$.

\begin{prop}
\label{orb}
The Zariski closure of the image of $\orb_u$ is $\S^3$ and $F_u\subset S$ is a quartic 4-fold. 
\end{prop}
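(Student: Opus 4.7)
The statement has two parts; I treat them in turn. For the density of the image, note that $\orb_u$ is the projective closure of $\tau^{-1}\circ\varphi(\cdot,v)$, and since $\varphi$ identifies $S\setminus B$ with $SE(3)$ acting transitively on $\R^3$, the image of $\varphi(\cdot,v)$ is all of $\R^3$. Its image under $\tau^{-1}$ is $\S^3$ minus the single pole $(0{:}0{:}0{:}0{:}1)$, which is Zariski-dense in $\S^3$, and the first claim follows.

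For the second claim, I would first reduce to the case $v=0$ by exploiting the right action of $SE(3)$ on $S$. Set $t:=1-\tfrac{1}{2}v\eps$, so that $t\bar t=1$; right multiplication by $t$ is then a linear automorphism of $\P^7$ preserving $S$. With $h':=h\cdot t=p+(q-\tfrac{1}{2}pv)\eps$, a direct substitution (using $\bar v=-v$ and $pv\cdot v\bar p=v^2\,p\bar p$) shows that every coordinate of $\orb_u(h)$ matches the corresponding coordinate of $\orb_\o(h')$, so $\orb_u(h)=\orb_\o(h')$. Hence $F_u$ is the image of $F_\o$ under this linear automorphism, and has the same dimension and degree as $F_\o$.

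To analyze $F_\o$, observe that its defining equations together with the Study relation reduce to $p\bar p=p\bar q=q\bar p=q\bar q=0$. Passing to $\H\otimes\C\cong M_2(\C)$, where quaternion conjugation corresponds to the matrix adjugate and the norm to the determinant, these conditions say that $p$ and $q$ are rank-$\le 1$ matrices with $\operatorname{im}(\operatorname{adj} q)\subseteq\ker p$ and $\operatorname{im}(\operatorname{adj} p)\subseteq\ker q$. A short rank-$1$ factorization (writing $p=\alpha\beta^T$ and tracking the kernel/image constraints) then forces the generic pair to have the form $(p,q)=(\alpha\beta^T,\,r\beta^T)$ with $\alpha,\beta,r\in\C^2$; equivalently, $(p,q)=(\alpha,r)\otimes\beta^T$ is a rank-$1$ tensor in $(\C^2\oplus\C^2)\otimes\C^2\cong\C^8$. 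Therefore $F_\o$ is the Segre variety $\P^3\times\P^1\hookrightarrow\P^7$, which has dimension $4$ and degree $\binom{3+1}{1}=4$.

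The delicate step is the last one: one must verify that the rank-$1$ factorization exactly describes $F_\o$, that no spurious components appear, and that the shared right-factor $\beta$ is genuinely forced by the kernel conditions rather than an artifact of the generic analysis (in particular, the degenerate strata $\{p=0\}$ and $\{q=0\}$ must lie in the closure of the generic Segre stratum). Once the Segre structure is confirmed, the dimension and degree follow from the classical Segre formula.
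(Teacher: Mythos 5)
Your proposal is correct, and for the second (and harder) half it takes a genuinely different route from the paper. For the density claim your argument is essentially the paper's (transitivity of $SE(3)$ on $\R^3$, plus the fact that $\orb_u$ restricts to $\tau^{-1}\circ\varphi(\cdot,v)$ on $S\setminus B$); the paper additionally records the identity $w\bar w=p\bar p\cdot\left(4q\bar q-p\bar p v^2+2(qv\bar p-pv\bar q)\right)$ to see that the image lands in $\S^3$ at all, a containment you should state explicitly since ``the closure equals $\S^3$'' needs both inclusions. For $F_u$, the paper reduces to $u=\o$ by an abstract transitivity argument --- your explicit right translation $t=1-\tfrac12 v\eps$ is a concrete and correct implementation of this, modulo the small slip that $F_u=F_\o\cdot t^{-1}$ rather than the image of $F_\o$ under right multiplication by $t$, which is immaterial for dimension and degree --- and then simply computes the Hilbert function of the ideal of $F_\o$ with a computer algebra system. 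Your Segre identification replaces that computation by a structural argument, and the ``delicate step'' you flag does close cleanly: under $\H\otimes\C\cong M_2(\C)$ the six quadrics $p\bar p$, $q\bar q$ and the four entries of $p\bar q$ become exactly the six $2\times 2$ minors of the $4\times 2$ matrix obtained by stacking the matrix of $p$ on that of $q$ (the two pure minors are the two determinants, and the four mixed minors are, up to sign, the entries of $p\operatorname{adj}(q)$), while $q\bar p=\overline{p\bar q}$ contributes nothing new. Hence $F_\o\otimes\C$ is precisely the rank-$\le 1$ locus of that matrix, i.e.\ the Segre variety $\P^3\times\P^1\subset\P^7$, of dimension $4$ and degree $\binom{4}{1}=4$, and the degenerate strata $p=0$ and $q=0$ are automatically contained in it, so no spurious components arise. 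Once you write out this identification of the equations with the minors, your proof is complete and, unlike the paper's, computer-free.
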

 
\begin{proof}
Suppose that $p+\eps q\in S$ so that $p\bar{q}+q\bar{p}=0$
and $w:=pv\bar{p}+p\bar{q}-q\bar{p}$.
We find that $w\bar{w}=-(pv\bar{p}+2p\bar{q})^2
=
p\bar{p}\cdot (4q\bar{q}-p\bar{p}v^2+2(qv\bar{p}-pv\bar{q}))$.
It follows that the image of $\orb_u$ is contained in $\S^3$, 
since $w\bar{w}=w_1^2+w_2^2+w_3^2$.
Recall that $S\setminus B$ is isomorphic to $SE(3)$ via the Study kinematic mapping,
and thus the image of $\orb_u$ is Zariski dense in $\S^3$.

An Euclidean isometry of $\R^3$ 
is realized by an automorphism of $S$ that preserves $B$.
Thus there exists $e\in S\setminus B$ such that $\orb_\o(e)=u$. 
Since the automorphisms of $S\setminus B$ and $\R^3$ are transitive,
we may assume without loss of generality that 
$e=\e$ and $u=\o$. 
The locus where $\orb_\o$ is not defined equals
\[
F_\o=\{~ p+q\eps \in\D\H ~|~ p\bar{q}=q\bar{p}=p\bar{p}=q\bar{q}=0 ~\}\subset S.
\]
We computed with a computer algebra system the Hilbert function of the ideal of $F_\o$ 
and find that $\dim F_\o=\deg F_\o=4$.
This concludes the proof of this proposition.
\end{proof}


A \emph{circle} is an irreducible conic in $\S^3$. 
Let
\begin{align*}
{\cal L}_\e &:=\{~ \ell\subset S ~|~ \ell \text{ is a line such that } \e\in \ell ~\},
\\
{\cal C}_\o &:=\{~ C\subset\S^3 ~|~ C \text{ is either a circle or a point such that } \o\in C ~\}.
\end{align*}

\begin{lem}
\label{l}
If $\ell\subset S$ is a line, then $\orb_u(\ell)$ is a circle or a point.
Moreover, the following map is almost everywhere one-to-one
\[
\psi\colon {\cal L}_\e \to  {\cal C}_\o,\quad \ell \mapsto \orb_\o(\ell).
\]
\end{lem}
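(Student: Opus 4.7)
My approach is to prove the two assertions separately. For the first claim, I note that the orbit map $\orb_u$ is defined component-wise by polynomials of degree $2$ in the homogeneous coordinates $(p,q)$ of $\P^7$. Hence its restriction to a line $\ell\subset\P^7$ factors through a rational map $\P^1\dashrightarrow\P^4$ of degree at most $2$, whose image has Zariski closure an irreducible subvariety of $\S^3$ of degree at most $2$. Since the M\"obius quadric $\S^3$ is a real quadric of signature $(1,4)$, it is oval and contains no real lines, so the image must be either a single point or an irreducible conic, that is, a circle.

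For the map $\psi$, I would begin with a dimension count. The lines of $S$ through $\e$ form the base of the tangent cone $T_\e S \cap S$, which for the smooth quadric $S\subset\P^7$ is a smooth $4$-dimensional quadric, so $\dim {\cal L}_\e = 4$. On the other side, circles in $\S^3$ are conics cut out by $2$-planes in $\P^4$, and the $2$-planes through $\o$ form a Grassmannian of dimension $4$, so $\dim {\cal C}_\o = 4$. Hence $\psi$ is a map between irreducible varieties of the same dimension, and to establish that it is almost everywhere one-to-one it suffices to show dominance together with generic injectivity.

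I would then construct the inverse of $\psi$ on a dense open set using the isomorphism $S\setminus B\cong SE(3)$ from the Study kinematic mapping. A line through $\e$ in $S\setminus B$ projectively completes a one-parameter linear subgroup of $SE(3)$. A direct coordinate computation---substituting a linear parametrization of $\ell$ into the equation of $S$ at $\e$---shows that such lines are parametrized by pairs $(a,b)\in\R^3\oplus\R^3$ satisfying the Pl\"ucker-like relation $a\cdot b = 0$, which kinematically describe rotations about some axis in $\R^3$ (when $a\neq0$) or pure translations (when $a=0$). The orbit of $\o$ under either type of subgroup is a circle in $\R^3$, a line through the origin in $\R^3$, or the point $\o$, whose inverse stereographic image lies in ${\cal C}_\o$. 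Conversely, a generic element of ${\cal C}_\o$ corresponds under stereographic projection to a circle or line through the origin in $\R^3$, from which the rotation axis or translation direction---and hence the unique preimage line in $S$---can be recovered.

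The main obstacle is precisely this explicit algebraic correspondence between lines of $S$ through $\e$ and the set of pairs $(a,b)$ with $a\cdot b = 0$, together with its kinematic interpretation showing that screw motions (with nonzero pitch) correspond to higher-degree curves in $S$ rather than to lines. Some additional care is needed around degenerate loci, such as lines contained in the Study boundary $B$ where $\orb_\o$ is undefined, or rotations whose axis passes through $\o$ (collapsing the orbit to a point), but these form a proper closed subset and hence do not affect the almost-everywhere one-to-one conclusion.
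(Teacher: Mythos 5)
Your proposal is correct. For the first assertion you take a genuinely different route from the paper: the paper simply cites the kinematic classification of lines in $S$ as rotation or translation subgroups and notes that their orbits are circles, lines or points of $\R^3$, whereas you observe that the components of $\orb_u$ are quadratic forms on $\P^7$, so the closure of $\orb_u(\ell)$ is an irreducible subvariety of $\S^3$ of dimension at most one and degree at most two, and then exclude the line case because the signature-$(1,4)$ quadric $\S^3$ contains no line defined over $\R$. This is more self-contained and applies uniformly to lines not passing through $\e$, which in the kinematic picture are cosets of one-parameter subgroups rather than subgroups themselves --- a point the paper's one-line argument glosses over. For the second assertion your argument is essentially the paper's: both identify lines through $\e$ with rotation or translation subgroups (your explicit parametrization by pairs $(a,b)$ with $a\cdot b=0$ makes concrete what the paper delegates to its citation), recover the unique preimage of a generic circle from its rotation axis, and discard the lower-dimensional exceptional loci (lines in $B$, rotations whose axis passes through $\o$). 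The dimension count $\dim{\cal L}_\e=\dim{\cal C}_\o=4$ is implicit in the paper's phrase ``the 4-dimensional set ${\cal C}_\o$'' and is not strictly needed once the inverse is exhibited on a dense open set, but it is a useful consistency check.
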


\begin{proof}
Lines in the Study quadric correspond to either rotations or translations \citep[Section~2.5]{hs}
and orbits under these 1-parameter subgroups are circles, lines or points in $\R^3$
und thus via the stereographic projecion $\tau$ points or circles in $\S^3$. 

We can associate to a circle in the 4-dimensional set ${\cal C}_\o$,
a unique 1-parameter subgroup of rotations corresponding to a line in ${\cal L}_\e$. 
There is a 2-dimensional set of lines in ${\cal L}_\e$ such that the
rotational axis of these lines passes through $\o$ and the corresponding Lie circle is the point $\o$.
Thus $\psi$ is one-to-one except for a lower dimensional subset as was claimed.
\end{proof}

\section{Quadric surfaces in the Study quadric}

A \emph{Darboux cyclide} is defined as a quartic weak del Pezzo surface in $\S^3$ \citep[Section~8.6.2]{dol}.
Such surfaces are the intersection of $\S^3$ with a quadric hypersurface \citep[Theorem~8.6.2]{dol}. 
Let $\U_\o$ denote the set of quadric surfaces $Q\subset S$ such that 
either there exists $V\cong\P^3$ such that
$Q\subset V\subset S$, or $Q\subset F_\o \subset S$.

\begin{lem}
\label{pmz}
If $Q\subset S$ is a doubly ruled quadric surface such that $Q\notin \U_\o$, 
then $\orb_\o(Q)\subset \S^3$ is a Darboux cyclide.
\end{lem}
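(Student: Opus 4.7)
The plan is to combine Lemma~\ref{l} with a degree computation. By Lemma~\ref{l}, every line in $S$ maps under $\orb_\o$ to a circle or a point in $\S^3$. The two rulings of $Q$ therefore produce two one-parameter families of circles covering $\orb_\o(Q)$, so to recognise $\orb_\o(Q)$ as a Darboux cyclide it remains to show it is a quartic surface in $\S^3$, using the characterisation recalled at the start of Section~3.

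The hypothesis $Q\notin\mathcal U_\o$ serves two purposes. The condition $Q\not\subset F_\o$ ensures $\orb_\o$ is regular on a dense open set of $Q$, so $\orb_\o(Q)$ is well defined as the Zariski closure of the image on that locus. The condition that $Q$ is not contained in any linear $\P^3\subset S$ rules out degenerations in which $\orb_\o(Q)$ collapses to a lower-dimensional variety (for instance, when $Q$ sits in the closure of a single fibre of the orbit map); together these guarantee $\dim\orb_\o(Q)=2$.

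The key step is to compute $\deg\orb_\o(Q)=4$. Since $\orb_\o$ is given by five quadratic forms on $\P^7$, the pullback of a hyperplane of $\P^4$ to the smooth quadric $Q\cong\P^1\times\P^1$ is a divisor of bidegree $(2,2)$, and two such divisors meet in $2\cdot 2+2\cdot 2=8$ points of $Q$. Hence $\deg(\orb_\o|_Q)\cdot\deg\orb_\o(Q)=8$. The map degree is $2$: through a generic point of $\orb_\o(Q)$ passes one circle from each of the two families, and each such circle has a unique preimage on its generating line; the two preimages come from lines in different rulings and are therefore distinct points of $Q$. This gives $\deg\orb_\o(Q)=4$. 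An alternative, more computational route is to compute the Hilbert polynomial of the image ideal symbolically, as was done for $F_\o$ in Proposition~\ref{orb}.

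An irreducible quartic surface in $\P^4$ contained in the smooth quadric $\S^3$ is the complete intersection of $\S^3$ with a second quadric, and is therefore a Darboux cyclide. The main obstacle is pinning down the map degree in the previous step: the Bezout bound gives only $\deg\orb_\o(Q)\le 8$, and excluding the $1$-to-$1$ possibility rests on the geometric claim that the two rulings contribute genuinely distinct preimages above a generic image point; correctly accounting for any $1$-dimensional component of $F_\o\cap Q$ (which would reduce the effective intersection number) requires additional care. The symbolic Hilbert-function calculation sidesteps both issues and can be propagated to arbitrary $Q\notin\mathcal U_\o$ by upper-semicontinuity of the Hilbert function over the parameter space of doubly ruled quadrics in $S\setminus\mathcal U_\o$.
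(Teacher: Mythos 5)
There is a genuine gap in the central degree computation. You write $\deg(\orb_\o|_Q)\cdot\deg\orb_\o(Q)=8$ and then claim the map has degree $2$; both halves of this are wrong, and the final answer of $4$ comes out only because the two errors cancel. First, the linear system defining $\orb_\o\circ\mu$ on $Q\cong\P^1\times\P^1$ is \emph{not} base-point free: $F_\o$ is a quartic $4$-fold by Proposition~\ref{orb}, and since $Q\notin\U_\o$ the $3$-space $V$ with $Q=V\cap S$ meets $F_\o$ in $\deg F_\o=4$ points (counted with multiplicity), so the correct relation is $\deg(\orb_\o|_Q)\cdot\deg\orb_\o(Q)=(2,2)^2-4=4$. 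This is exactly the point you defer to ``additional care'' at the end, but it is not a side issue --- it is where the number $4$ actually comes from. Second, your argument that the map has degree $2$ is a non sequitur: the preimage of a generic $y\in\orb_\o(Q)$ on a line $\ell$ of one ruling and the preimage on the line $\ell'$ of the other ruling need not be distinct, because $\ell$ and $\ell'$ meet in a point of $Q$, and generically that intersection point is precisely the common preimage of $y$ --- the two circles $\orb_\o(\ell)$ and $\orb_\o(\ell')$ pass through $y$ because they meet there. So $\orb_\o|_Q$ is birational onto its image, not $2:1$; had both of your claims been true they would in fact combine to give $\deg\orb_\o(Q)=(8-4)/2=2$, not $4$.

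The paper's proof starts from the same bidegree-$(2,2)$ parametrization $\orb_\o\circ\mu$ but then argues entirely through the four base points: the complete bidegree-$(2,2)$ system embeds $\P^1\times\P^1$ as a degree-$8$ weak del Pezzo surface in $\P^8$, and imposing each base point is a projection that realizes a blow-up and drops the degree and the embedding dimension by one \citep[Proposition~8.1.23]{dol}, yielding after four steps a quartic \emph{weak del Pezzo} surface in $\P^4$. This last point also matters for your concluding step: the paper defines a Darboux cyclide as a quartic weak del Pezzo surface in $\S^3$, so you must exhibit the weak del Pezzo structure, not merely an irreducible quartic surface lying on $\S^3$ (an arbitrary irreducible quartic surface in $\P^4$ need not be a complete intersection of two quadrics). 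Finally, the proposed fallback --- a symbolic Hilbert-function computation propagated by ``upper-semicontinuity'' over the family of doubly ruled quadrics --- is not a proof: semicontinuity bounds the Hilbert function in one direction only and does not rule out that the degree of the image jumps or drops for the particular $Q$ at hand.
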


\begin{proof}
There exists
bilinear homogeneous $a+\eps b \in \D\H[s_0,s_1,t_0,t_1]$ such that $Q$ is parametrized by
\[
\mu\colon \P^1\times\P^1\dashrightarrow Q\subset S,\quad (s_0:s_1;t_0:t_1)\mapsto a+\eps b,
\]
and $a\bar{b}-b\bar{a}=w_1\qi+ w_2\qj+ w_3\qk$ with $w_1,w_2,w_3\in\R[s_0,s_1,t_0,t_1]$ so that
\begin{equation*}
\begin{array}{rrcl}
\orb_\o\circ\mu\colon& \P^1\times\P^1 &\dashrightarrow& D\subset\S^3,
\\
&(s,t) &\mapsto& (a\bar{a}:w_1:w_2:w_3:4b\bar{b}). 
\end{array}
\end{equation*}
Thus the map $\orb_\o\circ\mu$ is of bidegree (2,2) into $\P^4$.
By Proposition~\ref{orb}, the map $\orb_\o$ is not defined at a quartic 4-fold $F_\o\subset S$.
We observe that $Q$ is the intersection of $S$ with a 3-space, since $Q\notin \U_\o$.
It follows that $Q\cap F_\o$ consists of 4 points (counted with multiplicity).
Thus $\orb_\o\circ\mu$ has 4 base points. A basis of all bidegree (2,2) functions on $\P^1\times\P^1$
defines a map whose image is a degree 8 weak del Pezzo surface $X\subset \P^8$.
A basis of bidegree (2,2) functions that pass through a basepoint, 
defines a map whose image is a projection of $X$ from a point so that the degree 
and embedding dimension drops by one.
Such a projection realizes the blowup of $X$ in a point and is again a weak del Pezzo surface 
\citep[Proposition~8.1.23]{dol}. 
Thus $M$ can be obtained as 4 subsequent projections of $X$, which 
results in a quartic weak del Pezzo surface in $\P^4$.
This concludes the proof of this lemma, since the image of $\orb_\o$ is $\S^3$ 
by Proposition~\ref{orb}. 
\end{proof}

\begin{figure}[b]
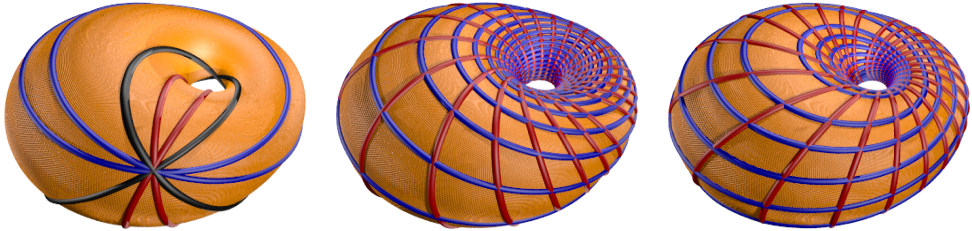

\centering
\fig{3}{4}{darboux-0}~~ 
\fig{3}{4}{darboux-1}~~ 
\fig{3}{4}{darboux-2} 
\caption{\it
A smooth Darboux cyclide contains six circles through each point
and admits $\binom{6}{2}-3$ pairs of families of circles $F$ and $F'$ so that $F\cdot F'=1$.}
\label{fig}
\end{figure}

A \emph{family of curves} of a surface $X$ is defined 
as an irreducible hypersurface $F\subset X\times\P^1$ 
such that the closure of the first projection of $F$ equals $X$.
A curve $F_t\subset X$ in the family $F$ for some $t\in \P^1$ is defined as $\pi_1( F\cap X\times\{t\})$.
If $F$ and $F'$ are families of $X$, then we denote by $F\cdot F'$ the number of intersections
of a general curve in $F$ and a general curve in $F'$. See also Figure~\ref{fig}.

\begin{lem}
\label{darboux}
If two different Darboux cyclides in $\S^3$ intersect in three circles,
then two of these circles are co-spherical.
\end{lem}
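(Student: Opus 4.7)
The plan is to write $D_i = \S^3\cap Q_i$ for quadric hypersurfaces $Q_i\subset\P^4$, so $D_1\cap D_2 = \S^3\cap Q_1\cap Q_2$ is a curve of degree $8$ lying on the (possibly weak) del Pezzo surface $X:=Q_1\cap Q_2$ of degree $4$; as a divisor class on $X$ it belongs to $|-2K_X|$. The key translation is that two circles $C,C'\subset\S^3$ are co-spherical iff the $2$-planes spanning them meet in a line of $\P^4$, iff they share at least two points of $\P^4$, iff $C\cdot_X C'\ge 2$.

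Each circle $C_i$ is a smooth conic on $X$ with $C_i^2=0$ and $(-K_X)\cdot C_i = 2$. Writing $D_1\cap D_2 = C_1+C_2+C_3+R$ for a residual divisor $R$ of degree~$2$, one computes $(-K_X)\cdot R = 2$ and $R^2 = -8 + 2\sum_{i<j}C_i\cdot_X C_j$. The first case is when $R$ is an irreducible conic (a fourth circle): then $R^2=0$ forces $\sum_{i<j}C_i\cdot_X C_j = 4$, so by pigeonhole some $C_i\cdot_X C_j\ge 2$ and that pair is co-spherical.

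Otherwise $R$ is reducible, so $R^2\le -2$, and a classification of effective classes of degree $2$ on a weak del Pezzo of degree $4$ forces $R$ to decompose into lines on $X\subset\S^3$ (together possibly with $(-2)$-curves), the paradigmatic case being $R=\ell_1+\ell_2$, a pair of disjoint $(-1)$-curves. To handle this, I pass to the net of quadrics $N:=\langle\S^3,Q_1,Q_2\rangle\subset\P^{14}$, a $\P^2$ each of whose members contains $D_1\cap D_2$. Because $\S^3|_{\pi_i}$, $Q_1|_{\pi_i}$, and $Q_2|_{\pi_i}$ are all scalar multiples of the single conic equation for $C_i$, the restriction map $N\to\{\textup{conics on }\pi_i\}$ has $1$-dimensional image, hence a line $L_i\subset N$ of kernel quadrics containing the full plane $\pi_i$. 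Each pairwise intersection $L_i\cap L_j$ yields a quadric $Q_{ij}$ containing $\pi_i\cup\pi_j$, which must have rank $\le 4$; a rank analysis shows that in every case except ``rank-$4$ cone with $\pi_i,\pi_j$ in the same ruling meeting only at the vertex'' the planes $\pi_i,\pi_j$ automatically share a line, giving co-sphericality.

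The main obstacle is ruling out the ``same-ruling'' configuration for all three pairs simultaneously. Here I plan to use the residual lines: each $C_i$ meets each $\ell_j$ transversally in one point (by the intersection computation on $X$), so in the hyperplane $H$ spanned by $\ell_1\cup\ell_2$ the circles define three chord lines $\chi_i = \pi_i\cap H$, each passing through a point of $\ell_1$ and a point of $\ell_2$. I expect to conclude by showing that compatibility of the three rank-$4$ cone structures at the vertices of the triangle $L_1L_2L_3$ with the chord configuration forces two of the $\chi_i$'s to coincide, hence $\pi_i\cap\pi_j$ to contain a line, contradicting the hypothesis that no pair is co-spherical.
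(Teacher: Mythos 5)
Your handling of the main case is sound and is, up to a change of ambient surface, the paper's own argument: the paper works in the Picard lattice of $D_1$, observes $[D_1\cap D_2]=-2\kappa$, decomposes this class into four conic classes, and checks that every such decomposition contains a pair with intersection number $2$. Your version replaces the enumeration of conic classes by the adjunction identity $C_i^2=0$ and the pigeonhole $\sum_{i<j}C_i\cdot C_j=4$, which is cleaner. Three small points. First, you could run the whole computation on $D_1$ itself, where $D_1\cap D_2$ is cut out by the second quadric and is again anti-bicanonical; this avoids having to justify that $Q_1\cap Q_2$ is a weak del Pezzo surface. Second, the biconditional ``co-spherical iff $C\cdot C'\ge 2$'' fails in the direction you do not use (two disjoint parallel circles on a $2$-sphere are co-spherical), though the direction you need is correct. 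Third, ``$R$ reducible $\Rightarrow R^2\le -2$'' is not literally true (two concurrent $(-1)$-lines give $R^2=0$), but in that case $\sum_{i<j}C_i\cdot C_j=4$ and the pigeonhole still applies, so the genuinely problematic cases are exactly those with $R^2\le -2$, as you go on to treat.

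The genuine gap is the endgame of the reducible case. You are right that this case cannot be dismissed numerically: for instance $(\alpha_0-\alpha_1)+(\alpha_0-\alpha_2)+(\alpha_0-\alpha_3)+(\alpha_0-\alpha_4-\alpha_5)+(2\alpha_0-\alpha_1-\cdots-\alpha_5)=-2\kappa$ exhibits three conic classes with all pairwise products equal to $1$ together with two disjoint $(-1)$-classes of degree $1$, so some geometric input is unavoidable; to your credit, the paper's own proof silently asserts that the residual is a fourth conic and never addresses this. But your proposed geometric input is not yet a proof. The net-of-quadrics setup is correct as far as it goes: each plane $\pi_i$ forces a pencil $L_i\subset N$ of quadrics containing it, each $L_i\cap L_j$ gives a quadric of rank at most $4$ through $\pi_i\cup\pi_j$, and only the ``rank $4$ with $\pi_i,\pi_j$ in the same ruling'' configuration avoids co-sphericality. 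The decisive step, however --- showing that this configuration cannot occur for all three pairs simultaneously --- is only announced (``I expect to conclude by showing\dots''), and it is precisely the hard part; as written there is no argument that the three chord lines $\chi_i$ and the three cone vertices are incompatible. One resource you do not exploit and that may be what is needed to close the case: the real quadric $\S^3$ has signature $(1,4)$ and hence contains no real lines, so $\ell_1,\ell_2$ must form a complex-conjugate pair, which constrains the chord configuration considerably. Until that step is carried out, the lemma remains unproved when the residual intersection is not a fourth conic.
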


\begin{proof}
Suppose that $D,D'\subset \S^3$ are Darboux cyclides.
We can associate to the weak del Pezzo surface $D$ its Picard group, which is a quadratic lattice
$\langle \alpha_0,\alpha_1,\ldots,\alpha_5\rangle_\Z$ 
with intersection pairing $\alpha_0^2=1$, $\alpha_i^2=-1$ for $i>0$ and $\alpha_i\cdot\alpha_j=0$ for $i\neq j$ 
\citep[Section~8.2.1]{dol}. 
We associate to a curve $C\subset D$ its divisor class $[C]$ in the Picard group of $D$.
The class of any hyperplane section of $D$ is equal to the anticanonical class $-\kappa=3\alpha_0-\alpha_1-\ldots-\alpha_5$.
Both $D$ and $D'$ are intersections of $\S^3$ with a quadric hypersurface
so that $[D\cap D']=-2\kappa$ and $\deg D\cap D'=8$. 
Since $D\cap D'$ contains 3 circles by assumption, 
it follows that $D\cap D'$ consists of 4 conics.
The class of a conic in $M$ is either
$\alpha_0-\alpha_i$ or $2\alpha_0+\alpha_i-\alpha_1-\ldots-\alpha_5$ for some $1\leq i\leq 5$.
The classes of the conics have to add up to $-2\kappa$, and must be of the following form
\[
\scriptsize
(\alpha_0-\alpha_i)+(2\alpha_0+\alpha_i-\alpha_1-\ldots-\alpha_5)
+
(\alpha_0-\alpha_j)+(2\alpha_0+\alpha_j-\alpha_1-\ldots-\alpha_5),
\]
for some $1\leq i,j\leq 5$.
Since $(\alpha_0-\alpha_i)\cdot (2\alpha_0+\alpha_i-\alpha_1-\ldots-\alpha_5)=2$ 
there are two co-spherical circles.
\end{proof}

\begin{lem}
\label{DQ}
If $F, F'\subset D\times \P^1$ are families of circles on a Darboux cyclide $D\subset\S^3$ such that $F\cdot F'=1$ and $\o\in D$,
then there exists a unique doubly ruled quadric surface $Q\subset S$ such that $\e\in Q$, $Q\notin \U_\o$, $\orb_\o(Q)=D$
and the two rulings of $Q$ correspond via $\orb_\o$ to $F$ and $F'$.
\end{lem}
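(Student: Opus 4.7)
The plan is to construct $Q$ explicitly as $V\cap S$ where $V\subset\P^7$ is a $3$-plane built from three lines of $S$, and then to establish uniqueness by showing that any candidate $Q'$ is built from the same three lines. Since $F\cdot F'=1$ and $\o\in D$, each of $F$ and $F'$ contains a unique circle through $\o$; call these $C$ and $C'$. Applying Lemma~\ref{l} gives unique lines $\ell:=\psi^{-1}(C)$ and $\ell':=\psi^{-1}(C')$ in ${\cal L}_\e$. The lines $\ell,\ell'$ are distinct and meet only at $\e$, so their linear span $\Pi$ is a $2$-plane in $\P^7$; these will be the two ruling lines of $Q$ through $\e$, and it remains to pin down one further direction to obtain $V$.

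To extract that direction, pick an auxiliary circle $\tilde C\in F$ in general position, let $p:=\tilde C\cap C'$ (a single point by $F\cdot F'=1$), and let $q\in\ell'$ be the unique lift with $\orb_\o(q)=p$, singled out by the birational parametrization $\orb_\o|_{\ell'}\colon\ell'\to C'$. Since $q\in S\setminus B$ is a displacement sending $\o$ to $p$, the translate $q^{-1}(\tilde C)$ passes through $\o$; its Lemma~\ref{l} lift is a line through $\e$, and multiplying by $q$ produces a line $\tilde\ell\subset S$ through $q$ with $\orb_\o(\tilde\ell)=\tilde C$, unique among such lines. Set $V:=\langle\ell,\ell',\tilde\ell\rangle$ and $Q:=V\cap S$. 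For generic $\tilde C$, $\tilde\ell\not\subset\Pi$, so $\dim V=3$, and the three lines $\ell,\tilde\ell$ (in one ruling) together with $\ell'$ (in the other, meeting both) exhibit $Q$ as a smooth doubly ruled quadric with $Q\notin\U_\o$. Lemma~\ref{pmz} then gives that $\orb_\o(Q)$ is a Darboux cyclide containing the three distinct circles $C,C',\tilde C\subset D$; choosing $\tilde C$ so that no two of $C,C',\tilde C$ are co-spherical, Lemma~\ref{darboux} forces $\orb_\o(Q)=D$, and the rulings of $Q$ match $F,F'$ because each ruling projects into the corresponding family and both are $\P^1$-families.

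For uniqueness, let $Q'\subset S$ be any doubly ruled quadric with the stated properties. Its two ruling lines through $\e$ project via $\orb_\o$ to the unique $F$- and $F'$-circles through $\o$, namely $C$ and $C'$, and so by Lemma~\ref{l} they must coincide with $\ell$ and $\ell'$. Let $\hat\ell\subset Q'$ be the ruling line whose image is $\tilde C$; since $\ell'$ lies in the opposite ruling, $\hat\ell\cap\ell'$ is a single point $\hat q$ with $\orb_\o(\hat q)=\tilde C\cap C'=p$, forcing $\hat q=q$ by the birationality of $\orb_\o|_{\ell'}$. The same translated Lemma~\ref{l} argument used above identifies $\hat\ell=\tilde\ell$, so $Q'\subset\langle\ell,\ell',\tilde\ell\rangle=V$ and $Q'=V\cap S=Q$.

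The main obstacle is the identification $\orb_\o(Q)=D$ in the existence step: this uses Lemma~\ref{darboux} together with the genericity of $\tilde C$ to exclude pairs of co-spherical circles among $C,C',\tilde C$. The remaining verifications — that $V$ has dimension $3$, that $Q\notin\U_\o$, and that $V\cap S$ is a smooth doubly ruled quadric rather than a degenerate intersection — are standard genericity checks that should go through without surprises.
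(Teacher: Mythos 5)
Your construction is essentially the paper's own proof: the same two lines $\ell,\ell'$ lifted from the circles through $\o$ via Lemma~\ref{l}, the same third line obtained by translating to a point of $C'$ and applying Lemma~\ref{l} again at that point, the same $3$-space $V$ with $Q=V\cap S$, and the same appeal to Lemma~\ref{pmz} and Lemma~\ref{darboux} to conclude $\orb_\o(Q)=D$. Your uniqueness argument is in fact spelled out more explicitly than the paper's, which only gestures at it through the uniqueness clauses of Lemma~\ref{l}.
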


\begin{proof}
Let $C,C'\subset D$ be two circles in $F$ and $F'$ respectively, such that $C\cap C'=\o$.
By Lemma~\ref{l} there exist unique lines $\ell,\ell'\subset S$ 
containing $\e$ such that $\orb_\o(\ell)=C$ and $\orb_\o(\ell')=C'$.
We choose some point on $h\in \ell'$ and let 
$C''$ be the unique circle in the family $F$ that passes through $p:=\orb_\o(h)$.
We apply Lemma~\ref{l} with $\o$ replaced by $p$
and obtain a unique line $L\subset S$ containing $\e$.
It follows from the construction that $C''=\orb_\o(\ell'')$, 
where $\ell'':=hL$ and $hL$ means the image of each point in $L$
multiplied with the dual quaternion $h$.
\begin{center}
\fig{2}{8}{circles-lines} 
\end{center}
Thus we obtain three intersecting lines $\ell,\ell',\ell''\subset S$ spanning a 3-space $V$ and 
three circles $C,C',C''\subset D$ that pairwise intersect in at most one point.
We have that $V\nsubseteq S$, otherwise $\orb_\o(V)$ would be either a point or a 2-sphere by Lemma~\ref{l}.
Thus $V\cap S$ defines a unique quadric surface $Q\subset S$ 
such that $\ell,\ell',\ell''\subset Q$ and $Q\notin \U_\o$.
It follows from Lemma~\ref{pmz} that $D':=\orb_\o(Q)$ is a Darboux cyclide 
such that $C,C',C''\subset D\cap D'$. 
It follows from Lemma~\ref{darboux} that $D$ and $D'$ must be equal.
By Lemma~\ref{l}, lines in $S$ correspond to circles in $\S^3$
and a quadric $Q\subset S$ is covered by two families of lines.
This concludes the proof of this lemma. 
\end{proof}

\begin{rem} \label{rimas}
For the existence statement in Lemma~\ref{DQ}, it is also possible to give an algebraic proof
without using Lemma~\ref{darboux}. By \citep[Theorem~11]{j}, there exists a parametrization 
for Darboux cyclides of bidegree $(2,2)$ 
so that the parameter curves are the circles
in $F_1$ and $F_2$, respectively. 
Lifting the parametrization to $\S^3$, we obtain 5 biquadratic polynomials
$X_0,\dots,X_5\in\R[s,t]$ such that $X_0X_4=X_1^2+X_2^2+X_3^2$ and 
$(X_0:X_1:X_2:X_3:X_4)$ is a parametrization of the Darboux cyclide $D$.
By \citep[Theorem 3]{kz}, there exist bilinear polynomials $A,B\in\H[s,t]$ with quaternion coefficients such that
\[ 
\norm(A)=X_0,\qquad \norm(B)=X_4,\qquad AB = X_1\qi+X_2\qj+X_3\qk. 
\]
The bilinear polynomial $H := \bar{A}+\eps B\in\D\H[s,t]$ then defines a parametrization of a nonsingular ruled
quadric in the Study quadric $S$, and the image of this quadric via $\orb_\o$ is exactly $D$.
\Mend
\end{rem}

\begin{thm}
\label{main}
The map $\orb_\o\colon S\setminus F_\o\to \S^3$ defines a one-to-one correspondence between 
two families of lines that cover a quadric surface $Q\subset S$
such that $\e\in Q$ with $Q\notin \U_\o$ --- and --- two non-cospherical families of circles 
that cover a Darboux cyclide $D\subset \S^3$ such that $\o\in D$.
\end{thm}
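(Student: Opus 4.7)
My plan is to package Lemmas~\ref{pmz},~\ref{DQ}, and~\ref{l} into the claimed two-way correspondence; everything except a single ``non-cospherical'' check already follows from those lemmas.

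For the forward direction, I fix a doubly ruled quadric $Q\subset S$ with $\e\in Q$ and $Q\notin\U_\o$, and apply Lemma~\ref{pmz} to conclude that $D:=\orb_\o(Q)$ is a Darboux cyclide, with $\o\in D$ because $\e\in Q$. Each of the two rulings of $Q$ is a $\P^1$-family of lines in $S$, and Lemma~\ref{l} sends the generic such line to a circle of $D$; collecting closures yields two families $F, F'$ of circles covering $D$. The step that does not follow as a black box is verifying $F\cdot F'=1$. For this I invoke the bidegree-$(2,2)$ parametrization $\orb_\o\circ\mu\colon\P^1\times\P^1\dashrightarrow D$ constructed in the proof of Lemma~\ref{pmz}: the families $F$ and $F'$ are the images of the two factor pencils on $\P^1\times\P^1$, and since two transverse parameter curves meet in a single generic point of $\P^1\times\P^1$ (which lies outside the base locus $Q\cap F_\o$), their image circles meet in a single generic point of $D$. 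Thus $F\cdot F'=1$, which is exactly non-cospherical in the sense of the Picard computation in Lemma~\ref{darboux} (cospherical pairs contribute intersection $2$).

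For the backward direction, given a Darboux cyclide $D\subset \S^3$ with $\o\in D$ and two non-cospherical families $F, F'$ of circles on $D$, I directly invoke Lemma~\ref{DQ} to obtain a unique doubly ruled quadric $Q\subset S$ with $\e\in Q$, $Q\notin\U_\o$, $\orb_\o(Q)=D$, and whose two rulings correspond under $\orb_\o$ to $F$ and $F'$. To see that the two maps are mutually inverse, the backward-then-forward composition returns $(F,F')$ tautologically because the rulings of the produced $Q$ are built to push forward to $F$ and $F'$; the forward-then-backward composition returns the original $Q$, since the $Q$ one re-extracts satisfies the same hypotheses as the original and is pinned down by the uniqueness clause of Lemma~\ref{DQ}. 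The main obstacle is the non-cospherical verification in the forward direction, as it is the only place one has to look inside the bidegree-$(2,2)$ parametrization and control the four base points $Q\cap F_\o$ rather than cite an earlier lemma outright.
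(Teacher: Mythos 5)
Your proposal follows essentially the same route as the paper, which likewise derives the forward direction from Lemma~\ref{pmz} together with Lemma~\ref{l} and the converse from Lemma~\ref{DQ}. The only difference is that you additionally spell out why $F\cdot F'=1$ using the bidegree-$(2,2)$ parametrization from the proof of Lemma~\ref{pmz}, a detail the paper leaves implicit; this is a correct and welcome elaboration rather than a different argument.
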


\begin{proof}
The left to right direction is a consequence of Lemma~\ref{pmz} and Lemma~\ref{l}.
The converse direction follows from Lemma~\ref{DQ}. 
\end{proof}

For example, let $D\subset \S^3$ be a surface that is 
covered by exactly $6$ families of circles and suppose that $\o\in D$. 
In this case $D$ is a Darboux cyclide and admits $15$ pairs of families of circles \cite{blum,s3}. 
There are exactly three pairs $(F,F')$ of families of circles such that $F\cdot F'=2$ (see Figure~\ref{fig}). 
For the remaining 12 pairs $(F,F')$ of families one has $F\cdot F'=1$ 
and thus by Theorem~\ref{main} there are 12 quadric surfaces $Q\subset S$ such that $\e\in Q$ and $\orb_\o(Q)=D$.

\section{Acknowledgements}

We thank R. Krasauskas for the algebraic proof of the existence statement in Lemma~\ref{DQ}
(see Remark~\ref{rimas}) and M. Skopenkov for the help in the analysis of the algebraic statements
underlying this proof.

\bibliographystyle{plainnat}
\bibliography{paper}

\vspace{-3mm}
\paragraph{}
N. Lubbes,
Johann Radon Institute for Computational and Applied 
Mathematics (RICAM), Austrian Academy of Sciences
\\
\textbf{email:} niels.lubbes@gmail.com

\vspace{-3mm}
\paragraph{}
J. Schicho, 
Research Institute for Symbolic Computation (RISC), 
Johannes Kepler University
\\
\textbf{email:} josef.schicho@risc.jku.at
\end{document}